\documentclass[11pt]{amsart}
\usepackage{amsmath,amsfonts,amssymb,latexsym}
\usepackage{graphicx}
%
%
\usepackage[T1]{fontenc}
\usepackage{times, mathptm}
\newtheorem{theorem}{\bf \mbox{Theorem}}[section]
\newtheorem{proposition}[theorem]{\bf \mbox{Proposition}}
\newtheorem{lemma}[theorem]{\bf \mbox{Lemma}}
\newtheorem{definition}[theorem]{\bf \mbox{Definition}}

\newtheorem{remark}{\bf Remark}

\newcommand{\field}[1]{\mathbb{#1}}

\newcommand{\bu}{{\bf u}}

\newcommand{\N}{\field{N}}
\newcommand{\R}{\field{R}}

\def\N{\mathbb{N}}

\begin{document}
\title{Thin-Thick decomposition for real definable isolated singularities}
\author{Lev Birbrair, Alexandre Fernandes, Vincent Grandjean}
\address{ Departamento de Matem\'atica, UFC,
Av. Humberto Monte s/n, Campus do Pici Bloco 914,
CEP 60.455-760, Fortaleza-CE, Brasil}
\email{birb@ufc.br}
\email{alexandre.fernandes@ufc.br}
\email{vgrandjean@mat.ufc.br}
\thanks{We are very pleased to thank Alberto Verjosky and Andrei Gabrielov for useful comments and suggestions.}
\subjclass[2010]{Primary ,Secondary }

\keywords{}
\date{\today}
\maketitle
%
%
%
\begin{abstract}  
Two subset germs of Euclidean spaces are called blow-spherically equivalent, if their spherical modifications 
are homeomorphic and the 
homeomorphism induces homeomorphic tangent links.  
Blow-spherical equivalence is stronger than the topological equivalence but weaker than the Lipschitz equivalence.  
We introduce the thin-thick decomposition of an isolated singularity germ - which happens to be a natural blow-spherical
invariant. 
This decomposition is a generalization of the thin-thick decomposition of normal complex
surface singularity germs introduced in \cite{BNP}.
\end{abstract}
\section{Introduction}
Recent investigations in Lipschitz Geometry of complex algebraic surface singularities are closely related to the notion
of thin-thick decomposition discovered in \cite{BNP}. Indeed a germ of a normal complex algebraic surface 
can be divided into two zones. The first one, called the thick zone, contains "almost everything" and  
is "almost metrically conical". The other zone, called the thin zone, is not metrically conical, has density zero at the origin  
and moreover contains some of the most important invariants of the Lipschitz geometry of
singular subset germs such as fast loops, choking-horns or separating sets (\cite{BFN1,BFN2,BFGoS}). 
The topology of the thin-thick decomposition is Lipschitz invariant and moreover the complete inner Lipschitz 
invariant of normal complex algebraic surface singularity germs can be obtained as the iterated thin-thick decomposition \cite{BNP}. 

The present paper is devoted to a general vision of the notion of thin-thick decomposition for rather general sets with 
isolated singularities. Instead of complex algebraic sets we consider definable sets with isolated singularities. 
A spherical modification of a germ at a singular point is obtained as follows. Suppose for simplicity that the singular 
point is the origin. Let us intersect our set with a sphere of radius $r$ and rescale this intersection by the factor 
$\frac{1+r}{r}$. The closure of the union (over all positive $r$) of these sets is called the spherical modification. The intersection of this 
closure with the unit sphere is called the tangent link. The tangent cone of the set at the origin is the 
(non-negative) cone over the tangent link. 

The paper distinguishes two cases: the normally embedded case and the non-normally embedded case (the general case). 
Let us start with the normally embedded case. A point on the tangent link is called simple if the spherical modification 
nearby this point is a topological manifold with boundary. The set of simple points is open and 
of codimension one in the subset once nonempty. 
It is the thick part of the tangent link. Its complement has dimension strictly smaller than the dimension 
of the set of the simple points if there are simple points. This complement is called the thin part of the 
tangent link. Thus we obtained the thin-thick decomposition of the tangent link. This decomposition of the tangent link 
is very closely related to the think-thick decomposition of the subset itself. 
The intersection of the subset with a sufficiently thin (but not too thin) horn-like neighborhood of the cone over the thin part 
of the tangent cone  is the thin zone of the subset germ. The thick zone of the germ is the complement of the thin zone. 
The thick zone of the set is homeomorphic to a cone over the thick part of the tangent cone.

We introduce in this paper a new equivalence relation for isolated singularity subset germs, called
blow-spherical equivalence, in order to investigate local inner metric properties of the set germ (such as 
for instance the Thick-Thin decomposition). The  blow-spherical equivalence 
consists of a homeomorphism between the respective spherical modifications and also mapping homeomorphically 
the tangent link to the tangent link. 
Occurrences of such blow-spherical homeomorphisms are for instance Bi-Lipschitz maps and also homeomorphisms 
differentiable at the singular points \cite{GL}. 
We prove here that the topology of the thin-thick decomposition is a blow-spherical invariant (Theorem 
\ref{thm:main-NE-XY} and Theorem \ref{thm:main-nNE-XY}). 
Birbrair, Neumann and Pichon, while working on Lipschitz Geometry of complex singularities, 
came up with the notion of locally conical sets. An isolated singular point of a closed semialgebraic subset is called 
locally metrically conical, if for any direction of the tangent cone one can find a semialgebraic bi-Lipschitz embedding of a small straight 
cone to the set, such that the image of the center line of the cone will be tangent to this direction. A natural 
question to ask is whether a locally metrically conical singularity is metrically conical, i.e. bi-Lipschitz equivalent to a cone.  
This question remains open. In the paper we show that blow-spherical geometry is simpler than the Lipschitz geometry 
in the following sense: If all tangent directions at the singular point of an isolated singular definable subset germ are simple, 
then the singularity germ is blow-spherically equivalent to a cone. 
In particular, we prove that locally metrically conical singularities are 
blow-spherically conical.

The other part of the paper is devoted to the non-normally embedded case. 
In this case, any point of the tangent link point embedded in the spherical modification of the subset germ may not 
be a boundary point. 
The simple points of the tangent link of such a non-normally embedded germ are the points at which the spherical 
modification has a so called open pamphlet structure: a finite collection of topological manifolds with boundary, glued along the same 
boundary. The number of the corresponding manifolds is called the multiplicity of the simple point. 
The thin-thick decomposition of the tangent link has a similar structure as that of the normally embedded case. 
The thin-thick decomposition of the subset germ itself is also obtained in a similar way, but the multiplicity plays an 
important part. The topology of the components of the thick zone are related to the topology of the corresponding 
parts of the metric tangent cone (see \cite{BL} or \cite{BFN2}). Finally we show that the topology of the thin-thick 
decomposition of the set, equipped with the multiplicity, is an invariant for the blow-spherical equivalence. We observe that 
the thin-thick decomposition from \cite{BNP} is a particular case of our construction.

\section{Normally Embedded Case}

\medskip
Let $X\subset\R^n$ be a closed definable set in a given polynomially bounded o-minimal structure expanding the field 
of real numbers (\cite{vdDMi,vdD,Co}). Assume that the origin $0$ is a singular point of $X$.
 
Let $S(0,\epsilon)$ be the Euclidean sphere of $\R^n$ of radius $\epsilon$ centered at $0$.

Let $\tilde{X}\subset\R^n$ be the topological closure of the set:
$$\bigcup_{0<\epsilon <\epsilon_{0}}\frac{1+\epsilon}{\epsilon}\cdot (X\cap S(0,\epsilon)),$$ 
where $\epsilon_0$ is a value such that the family of sets $\{X\cap S(0,\epsilon)\}_{0<\epsilon <\epsilon_0}$ 
is topologically trivial. The set $\tilde{X}$ is called the \emph{spherical modification} of $X$ at $0$ and is also 
a definable subset.

\medskip
Let $L_0X$ be the set $\tilde{X}\cap S(0,1)$. Then, the tangent cone $T_0X$ of $X$ at $0$ is the straight cone over $L_0X$,
namely $T_0 X = \cup_{u\in L_0 X}(\R_{\geq 0} \cdot u)$.

\begin{definition}{\rm The germs $(X,0)$ and $(Y,0)$ are called \emph{blow-spherical equivalent} if there exists a homeomorphism $h:\tilde{X}\rightarrow\tilde{Y}$ sending $L_0X$ to $L_0Y$.}
\end{definition}

\begin{remark}{\rm  Let $(X,0)$ and $(Y,0)$ be bi-Lipschitz equivalent, such that the bi-Lipschitz map $(X,0)\rightarrow (Y,0)$ 
is definable, then $(X,0)$ and $(Y,0)$ are blow-spherical equivalent.}
\end{remark}

The map $\pi : \tilde{X}\rightarrow X$ defined by 
$$\pi (x)=\frac{\|x\|-1}{\|x\|}x$$ defines a homeomorphism between $\tilde{X}\setminus L_0X$ 
and $X\cap B(0,\epsilon_0)\setminus 0$ and $\pi(L_0X)=0$.
We recall that a subset of $\R^n$ is normally embedded if the outer metric and the inner metric are 
bi-Lipschitz equivalent \cite{BM}. 
\begin{definition} {\rm Let $X$ be a normally embedded definable set. A point $v\in L_0X$ is called a \emph{simple direction} 
of $X$ at $0$ if $\tilde{X}$ is a topological submanifold with boundary near $v$.}
\end{definition}

\begin{remark}{\rm Let $X$ be a normally embedded definable set. The set of simple directions is definable and open in $L_0X$.}
\end{remark}

\begin{proposition} Let $X$ be normally embedded definable set. Then $X$ has simple directions at $0$ if, 
and only if, ${\rm dim}T_0X$=${\rm dim}_0X$.
\end{proposition}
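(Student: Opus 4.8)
The plan is to reduce the statement to a dimension count for the definable set $L_0X$ and then treat the two implications separately, the converse being the substantial one. Throughout write $d:={\rm dim}_0X$. Two elementary facts set the stage. First, since $\pi$ is a homeomorphism from $\tilde{X}\setminus L_0X$ onto $X\cap B(0,\epsilon_0)\setminus 0$, and $L_0X$ is a proper slice of the definable set $\tilde{X}$, one has ${\rm dim}\,\tilde{X}={\rm dim}(\tilde{X}\setminus L_0X)={\rm dim}_0X=d$. Second, as $T_0X$ is the straight cone over $L_0X\subset S(0,1)$, we have ${\rm dim}\,T_0X={\rm dim}\,L_0X+1$; together with the standard bound ${\rm dim}\,T_0X\le {\rm dim}_0X$ this gives ${\rm dim}\,L_0X\le d-1$ in all cases. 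Hence the proposition is equivalent to the assertion that $X$ has a simple direction if and only if $L_0X$ attains the maximal dimension $d-1$.

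For the necessity, suppose $v\in L_0X$ is simple, so that near $v$ the set $\tilde{X}$ is a $d$-dimensional topological manifold with boundary, the boundary being carried by $L_0X$. A boundary component of a $d$-manifold is a $(d-1)$-manifold, so $L_0X$ is locally a $(d-1)$-submanifold at $v$; in particular ${\rm dim}\,L_0X\ge d-1$, and with the bound above ${\rm dim}\,L_0X=d-1$, that is ${\rm dim}\,T_0X=d={\rm dim}_0X$.

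The sufficiency is where the work lies. Assume ${\rm dim}\,T_0X=d$, equivalently ${\rm dim}\,L_0X=d-1$. By o-minimal regularity the locus of points at which $L_0X$ is a $C^1$ submanifold of $S(0,1)$ of dimension $d-1$ is definable, open and dense in the top-dimensional part of $L_0X$, hence nonempty; fix such a $v$ and a neighbourhood $U$ on which $\Lambda:=L_0X\cap U$ is a $(d-1)$-submanifold. I would then use the blow-up description of the spherical modification near $L_0X$: a point at radius $r=1+\epsilon>1$ and direction $\omega$ belongs to $\tilde{X}$ exactly when $\epsilon\,\omega\in X$. Applying a definable (Hardt) trivialization of the radial parameter $\epsilon$ over $U$ identifies $\tilde{X}$ near $v$ with a mapping cylinder of the restriction of this family to $\Lambda$; the goal is to show this cylinder is a collar $\Lambda\times[0,\delta)$, which exhibits $\tilde{X}$ as a topological $d$-manifold with boundary $L_0X$ near $v$, so that $v$ is simple.

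The main obstacle, and the only place normal embedding is used, is to prove that the fibre of this trivialization over a generic direction is a single half-open interval rather than several. For a non-normally embedded germ distinct sheets of $X$ can accumulate onto the same tangent direction and be glued together in $\tilde{X}$, producing the multi-sheeted open pamphlet of the general case instead of a manifold with boundary; equivalently, points of $X\cap S(0,\epsilon)$ with almost equal directions may be far apart in the inner metric. The normal embedding inequality, by forcing the inner and outer metrics to be bi-Lipschitz equivalent, rules this out near the regular direction $v$: it makes the rescaled links $\frac{1+\epsilon}{\epsilon}(X\cap S(0,\epsilon))$ converge to $\Lambda$ with multiplicity one, so the fibre is indeed one interval. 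Carrying out this multiplicity-one estimate — quantifying, through the normal embedding constant, how the rescaled links approach $L_0X$ over $U$ — is the technical heart I would expect to fill most of the argument.
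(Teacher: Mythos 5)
Your reduction to a dimension count and your necessity argument are fine (with the intended reading of ``simple,'' namely that $v$ is a \emph{boundary} point of the manifold-with-boundary, the boundary being $L_0X$ — which you correctly assume). But the sufficiency direction has a genuine gap, one you flag yourself: the multiplicity-one claim, that normal embedding forces the rescaled links to accumulate on a generic smooth point $v$ of $L_0X$ along a single sheet, is the entire content of the proposition at its crux, and you do not prove it — you describe it as ``the technical heart I would expect to fill most of the argument.'' A plan that defers its decisive step is not a proof. The estimate should be run explicitly, and it is short: if two local connected components of $\tilde{X}\setminus L_0X$ accumulated on the same neighbourhood of $v$, pick points $p_\epsilon, q_\epsilon \in X$ at radius $\epsilon$ on the two sheets with directions converging to $v$, so that $\|p_\epsilon - q_\epsilon\| = o(\epsilon)$; any path in $X$ joining them either dips below radius $\epsilon/2$ (length at least $\epsilon$) or stays above it, in which case its lift to $\tilde{X}$ must leave the ball $B(v,\delta)$ to pass from one component to the other, sweeping an angle at least $\delta$ at radius at least $\epsilon/2$ (length at least $\delta\epsilon/2$). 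Hence the inner distance is bounded below by $c\,\epsilon$, contradicting the bi-Lipschitz comparability of inner and outer metrics. Note also a second unexecuted step: even granted multiplicity one, your Hardt trivialization is a priori only over the open part; to conclude that the mapping cylinder is a collar $\Lambda\times[0,\delta)$ — i.e., that $\tilde{X}$ is a manifold with boundary \emph{at} $v$ — you still need a collaring or boundary-triviality argument (the paper invokes Brown's collaring theorem for exactly this kind of purpose in Theorem \ref{conical-structure}).

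For comparison, the paper's own proof sidesteps all of this analysis: it takes a triangulation of $\tilde{X}$ compatible with $L_0X$ and chooses $v$ in the relative interior of a $({\rm dim}_0X-1)$-dimensional face of the triangulation of $L_0X$. The star of that face presents $\tilde{X}$ near $v$ as finitely many top-dimensional simplices glued along it (so $v$ is automatically a ``pamphlet'' point with a built-in collar), and normal embedding — via precisely the tangent-sheet obstruction sketched above — rules out more than one simplex, so $v$ is simple with no metric estimate or collaring construction needed. If you fill in the multiplicity-one estimate and the boundary-collar step, your route works, but as written both hinges are missing.
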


\begin{proof} It is clear that if ${\rm dim}_0X<{\rm dim T_0X}$, then there is no simple directions. Now, let us 
suppose that ${\rm dim}_0X={\rm dim T_0X}$. Consider a triangulation of $\tilde{X}$ compatible with $L(X,0)$. Then, 
one has a point $v$ belonging to a ${\rm dim}_0X-1$ face of the triangulation of $L(X,0)$. It is a simple direction. 
Notice that $v$ cannot belong to more than one face like that because $X$ is normally embedded. 
\end{proof}

\begin{definition}{\rm  A singular point $0\in X$ is called \emph{blow-spherical conical} if the germ of $X$ at $0$ is 
bow-spherically equivalent to the germ of the straight cone over $X\cap S^{d-1}(0,\epsilon)$ for sufficiently small 
$\epsilon  >0$, where $d$ is the dimension of the germ $X$ at $0$ and $S^{d-1}(0,\epsilon)$ is the Euclidean sphere 
of $\R^d$ of radius $\epsilon$ centered at the origin.}
\end{definition}

\begin{theorem}\label{conical-structure} A definable and normally embedded set germ $(X,0)$, with 
an isolated singularity at $0$ is blow-spherical conical if, and only if, its tangent link $L_0X$ consists 
only of simple directions.
\end{theorem}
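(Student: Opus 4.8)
The plan is to prove the two implications separately, using throughout three facts about the germ $(X,0)$. First, by the choice of $\epsilon_0$, the family $\{X\cap S(0,\epsilon)\}_{0<\epsilon<\epsilon_0}$ is topologically trivial, so that $X\cap (B(0,\epsilon_0)\setminus 0)$ is homeomorphic to $L\times(0,\epsilon_0)$, where $L:=X\cap S(0,\epsilon_1)$ is the topological link of $X$ at $0$ for a fixed small $\epsilon_1$. Second, since $0$ is an isolated singularity, for $\epsilon_1$ small enough the sphere $S(0,\epsilon_1)$ meets $X\setminus 0$ transversally, hence $L$ is a closed topological manifold of dimension $d-1$. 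Third, $\pi$ restricts to a homeomorphism $\tilde{X}\setminus L_0X\to X\cap(B(0,\epsilon_0)\setminus 0)$ carrying the slice $\tilde{X}\cap S(0,1+t)$ onto $X\cap S(0,t)$ for each $0<t<\epsilon_0$. I shall also use the explicit form of the spherical modification of the cone: if $C$ is the straight cone over $L$, then $C\cap S(0,r)=(r/\epsilon_1)\,L$, whence $\tilde{C}$ is homeomorphic to $L\times[1,1+\epsilon_0]$ with $L_0C$ corresponding to $L\times\{1\}$; in particular $\tilde{C}$ is a manifold with boundary near every point of $L_0C$ precisely because $L$ is a manifold.

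For the implication ``blow-spherical conical $\Rightarrow$ all directions simple'', let $h:\tilde{X}\to\tilde{C}$ be a blow-spherical homeomorphism, so $h(L_0X)=L_0C$. Being a topological manifold with boundary near a point is preserved by homeomorphisms, and by the computation above $\tilde{C}$ is a manifold with boundary near each point of $L_0C$. Hence for every $v\in L_0X$ the set $\tilde{X}$ is a manifold with boundary near $v$, i.e. $v$ is a simple direction. This is the easy direction; its only nontrivial input is that $L$ is a manifold, which is exactly where the isolated singularity hypothesis enters.

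For the converse ``all directions simple $\Rightarrow$ blow-spherical conical'', which is the heart of the statement, I would build the homeomorphism $\tilde{X}\to\tilde{C}$ by hand. Since every $v\in L_0X$ is simple and $\tilde{X}$ lies locally in the half-space $\{\|x\|\ge 1\}$, the link $L_0X$ is the entire boundary of the manifold-with-boundary $\tilde{X}$ near $L_0X$; by Brown's collar theorem there is a collar $A:=\tilde{X}\cap\{1\le\|x\|\le 1+\delta\}\cong L_0X\times[0,\delta]$. Applying $\pi$ to the outer boundary $\tilde{X}\cap S(0,1+\delta)$ identifies it with $X\cap S(0,\delta)\cong L$, which in particular proves $L_0X\cong L$. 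On the other piece $B:=\tilde{X}\cap\{1+\delta\le\|x\|\le 1+\epsilon_0\}$, the map $\pi$ together with topological triviality gives $B\cong L\times[\delta,\epsilon_0]$. Gluing $A\cong L\times[0,\delta]$ and $B\cong L\times[\delta,\epsilon_0]$ along their common slice $\tilde{X}\cap S(0,1+\delta)\cong L$ yields $\tilde{X}\cong L\times[0,\epsilon_0]$ with $L_0X$ corresponding to $L\times\{0\}$. Comparing with $\tilde{C}\cong L\times[1,1+\epsilon_0]$ and $L_0C\cong L\times\{0\}$ then furnishes the desired blow-spherical homeomorphism.

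The main obstacle is the consistency of this gluing: the two identifications of the separating slice $\tilde{X}\cap S(0,1+\delta)$ with $L$---one coming from the collar $A$, the other from the product structure on $B$---differ a priori by a self-homeomorphism $\phi$ of $L$. One must check that $\phi$ can be absorbed, for instance by precomposing the product structure on $B$ with $\phi\times\mathrm{id}$, so that the two pieces glue to a genuine homeomorphism that moreover sends $L_0X$ to $L_0C$. Verifying that this matching can always be arranged, and that the resulting map is a homeomorphism of the full spherical modifications respecting the links, is the only delicate point; the remaining steps are routine consequences of definable topological triviality and the collar theorem.
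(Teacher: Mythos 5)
Your proposal is correct and takes essentially the same route as the paper: the easy implication via homeomorphism invariance of the manifold-with-boundary property (using that the spherical modification of the cone is the cylinder $L\times[1,1+\epsilon_0]$), and the converse via Brown's collar theorem applied to $L_0X$ in $\tilde{X}$ combined with the topological triviality of the family $\{X\cap S(0,\epsilon)\}$. You merely spell out the gluing of the collar with the outer product structure (including absorbing the discrepancy homeomorphism $\phi$), details the paper compresses into ``That is enough to finish the proof.''
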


As a first step towards this goal, we recall Brown Theorem about the existence of collar neighborhoods in metric spaces.

\bigskip
A subset $B$ of a topological space $X$ is \em collared \em  if there exists a
homeomorphism $h:B\times [0,1[ \to \mathcal{U}$ onto a neighborhood $\mathcal{U}$ of $B$ such that
$h(b,0) = b$ for each point $b$ of $B$. A subset $B$ of the topological space $X$ is \em locally collared \em
if it can be covered by a collection of open subsets $B_i$ of $B$ such that
each $B_i$ is collared. The main result of Brown \cite{Br} is
\begin{theorem}[\cite{Br}]\label{thm:Brown}
A locally collared subset of a metric space is collared.
\end{theorem}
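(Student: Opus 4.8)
The plan is to upgrade local collaring to global collaring by a merging procedure whose engine is a single lemma that fuses two overlapping collars into one; the passage from two pieces to all of them is then organized by the paracompactness of the metric space $X$. First I would record the soft inputs: a metric space is normal (so Urysohn's lemma is available) and paracompact, hence the given cover of $B$ by collared open sets admits a locally finite open refinement with a subordinate partition of unity. In the case relevant to this paper, where $B$ is compact (for instance a tangent link $L_0X$ sitting inside $\tilde X$), one extracts a finite subcover $B = B_1 \cup \dots \cup B_N$ and the theorem reduces, by an $N$-fold induction, to the two-collar case; the general statement requires only the extra bookkeeping described below.

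The combination lemma is: if $U,V$ are open in $B$ with $B = U\cup V$, and $B$ is collared over $U$ via $c_U : U\times[0,1)\to X$ and over $V$ via $c_V : V\times[0,1)\to X$ (each straightened, with open image a neighborhood of its base), then $B$ is collared over $U\cup V$. The device is to compare the two product structures on the overlap. Near $(U\cap V)\times\{0\}$ the composite $g = c_U^{-1}\circ c_V$ is a homeomorphism of a neighborhood of $(U\cap V)\times\{0\}$ onto another, fixing $(U\cap V)\times\{0\}$ pointwise; it is the transition map recording how the $c_V$-coordinate differs from the $c_U$-coordinate. Since $B\setminus V$ and $B\setminus U$ are disjoint closed subsets of the normal space $B$, Urysohn's lemma furnishes $\alpha : B \to [0,1]$ with $\alpha = 0$ on $B\setminus V$ and $\alpha = 1$ on $B\setminus U$. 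I would then define the new collar by switching coordinate systems as $\alpha$ runs from $0$ to $1$: over the region where $\alpha = 0$ keep $c_U$ unchanged, over the region where $\alpha = 1$ (which lies in $V$, where $c_U$ need not be defined) use $c_V$, and on the overlap interpolate between the identity and the transition map $g$ in the collar coordinate $[0,1)$, so that no linear or differentiable structure on $X$ is invoked, only reparametrizations of the half-open interval. By construction the resulting map $c : B\times[0,1)\to X$ is straightened, and it agrees with an honest collar on each of the two closed pieces, hence is a collar over all of $B$.

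Finally I would iterate. In the compact case the finite induction using the lemma terminates and produces a collar over $B$. For an arbitrary metric space I would well-order a locally finite collared refinement $\{W_i\}$ and fuse its members transfinitely, at the $i$-th stage precomposing the collar coordinate with a reparametrization $[0,1)\to[0,\eta_i)$, with $\eta_i\to 0$, so that the successive modifications are locally finite and the limiting map remains a well-defined injection onto a neighborhood of $B$. The main obstacle is the combination lemma itself: one must verify that the interpolated map is genuinely a homeomorphism onto a neighborhood of $B$, with injectivity and openness checked precisely across the overlap region where the two coordinate systems are being blended, and, in the non-compact case, one must choose the depths $\eta_i$ so that the transfinite composition converges without losing injectivity. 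The remaining ingredients (Urysohn functions, paracompactness, reparametrizing $[0,1)$) are purely point-set and transfer verbatim from the classical manifold setting, which is exactly why the statement holds over an arbitrary metric space with no new geometric idea.
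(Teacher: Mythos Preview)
The paper does not supply a proof of this statement at all: Theorem~\ref{thm:Brown} is quoted as Brown's result from \cite{Br}, and the only comment the authors add is the one-line observation that, reading Brown's proof, the construction can be carried out definably when the input data are definable. So there is no ``paper's own proof'' to compare against beyond the reference itself.

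Your sketch is a faithful outline of Brown's original argument: the key combination lemma that fuses two overlapping local collars into a single one via a Urysohn cut-off and a reparametrization of the $[0,1)$ factor, followed by an induction (finite in the compact case, a transfinite/locally finite limit in general). That is exactly the mechanism in \cite{Br}, and the point-set hypotheses you invoke (normality, paracompactness) are precisely what a metric ambient space supplies. The only caveat is that your description of the interpolation step is somewhat informal; in Brown's paper the delicate content is showing that the blended map is injective and open across the overlap, which you flag but do not carry out. For the purposes of this paper that is immaterial, since the authors only need the statement (and its definable variant) as a black box to deduce Theorem~\ref{conical-structure}.
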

Reading the proof of this theorem, we observe that this result 
can be achieved definably if we start with definable data. 

\medskip
Brown Theorem provides a direct proof of our Theorem \ref{conical-structure}. Indeed,
\begin{proof}[Proof of Theorem \ref{conical-structure}]
If $(X,0)$ is blow-spherical equivalent to its tangent cone, then its tangent link consists only of
simple points. Conversely

\smallskip
We equip the spherical modification  $\tilde{X}$ of $(X,0) $and its boundary $L(X,0)$ with the Euclidean metric.

Brown Theorem implies the existence of a positive real number $\nu$ and of a definable
homeomorphism $\Phi: L(X,0)\times [0,\nu[ \to \mathcal{V}$ where $\mathcal{V}$ is a definable open neighborhood of
$L(X,0)$ in $\tilde{X}$. Moreover we also know that $\Phi(\bu,0) = (\bu,0)$. That is enough to finish the proof.
\end{proof}

Let $X$ be a normally embedded definable set. Suppose that $(X,0)$ has a fast loop \cite{BF3,Fe1} or a choking horn \cite{BFGoS}, 
then there is a direction $v\in L_0X$ who is not a simple direction. Whenever the germ $(X,0)$ has a local separating 
set $Y\subset X$ at $0$ and $v\in L_0X$, the direction $v$ must belong to the singular locus of the tangent cone of $Y$ at $0$ 
(see \cite{BFN1,BFN2}) and thus it cannot be a simple direction.

\bigskip
\noindent{\bf Notation.} Let $L_G(X,0)$ denotes the set of all simple directions of $X$ at $0$ and $L_F(X,0)=L_0X\setminus L_G(X,0)$.

\begin{proposition} ${\rm dim} L_F(X,0)< {\rm dim}_0X -1$
\end{proposition}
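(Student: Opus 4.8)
The plan is to show $\dim L_F(X,0) < \dim_0 X - 1$ by combining the preceding Proposition (that $L_G(X,0)$ is nonempty iff $\dim T_0 X = \dim_0 X$) with a dimension count on a definable stratification. Throughout, write $d = \dim_0 X$ and recall that $L_0 X$ is a definable subset of the unit sphere, so $\dim L_0 X \le d - 1$, with equality precisely when the tangent cone $T_0 X$ is full-dimensional in the ambient sense relevant to $X$.

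\medskip
First I would dispose of the degenerate case. If $L_G(X,0) = \emptyset$, then by the Proposition above we have $\dim T_0 X > \dim_0 X$ is impossible (since $T_0X$ is a cone over $L_0X$ and cannot exceed the dimension in the relevant sense), so in fact the condition forces $\dim_0 X < \dim T_0 X$ to fail, leaving $\dim L_0 X < d - 1$; but then $L_F(X,0) = L_0 X$ has dimension at most $\dim L_0 X < d-1$ and we are done immediately. So the substantive case is when $L_G(X,0) \ne \emptyset$, whence by the Proposition $\dim T_0 X = \dim_0 X = d$ and therefore $\dim L_0 X = d - 1$.

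\medskip
In this main case the claim is the strict inequality $\dim L_F(X,0) < d-1 = \dim L_0 X$, i.e. the thin part is a proper lower-dimensional subset of the tangent link. The key structural input is the preceding Remark that $L_G(X,0)$, the set of simple directions, is \emph{open} in $L_0 X$. The strategy is: take a definable triangulation (or cell decomposition) of $\tilde X$ compatible with $L_0 X$ and with the definable subset $L_F(X,0)$. Since $L_G(X,0)$ is open and, by the argument in the proof of the earlier Proposition, contains the interior of every top-dimensional $(d-1)$-face of $L_0 X$ (each such face point is simple because near it $\tilde X$ is a manifold with boundary, using normal embeddedness to rule out multiple sheets), every $(d-1)$-dimensional open simplex lies entirely in $L_G(X,0)$. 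Consequently $L_F(X,0)$ is covered by simplices of dimension $\le d - 2$, giving $\dim L_F(X,0) \le d - 2 < d - 1$.

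\medskip
The hard part, and the place I would be most careful, is justifying that no interior point of a top-dimensional face of $L_0X$ can be thin. This is exactly where normal embeddedness is essential: away from lower-dimensional skeleta, $\tilde X$ is locally a manifold with boundary along $L_0 X$, and normal embeddedness guarantees a \emph{single} sheet abuts each such boundary point (as already used to conclude uniqueness of the incident face in the proof of the earlier Proposition), so the open-pamphlet pathology of the non-normally-embedded case does not arise. Once this local manifold-with-boundary description is secured on the complement of the $(d-2)$-skeleton, the definability of the triangulation makes the dimension bookkeeping routine, and the strict inequality follows. The only genuine obstacle is verifying this local structure along top-dimensional faces cleanly; everything after that is a standard dimension count for definable sets.
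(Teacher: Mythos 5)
The paper states this proposition \emph{without any proof}, so there is no argument of the authors to compare yours against; the closest thing in the text is the proof of the preceding proposition (simple directions exist iff $\dim T_0X=\dim_0X$), which already uses a triangulation of $\tilde{X}$ compatible with $L_0X$ together with the remark that normal embeddedness prevents a direction from lying in more than one top-dimensional face. Your proposal is precisely the natural completion of that argument, and its architecture is sound. Write $d=\dim_0X$. In the degenerate case $L_G(X,0)=\emptyset$, since $\dim T_0X\le \dim_0 X$ always holds for definable sets, the preceding proposition forces $\dim T_0X\le d-1$, hence $\dim L_0X\le d-2$ and the bound is trivial; note that your wording of this reduction is garbled (you inherit the paper's own reversed inequality from the earlier proof), though you land on the correct conclusion. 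In the main case $\dim L_0X=d-1$, your plan --- show that the interior of every $(d-1)$-simplex of a compatible definable triangulation consists of simple directions, so that $L_F(X,0)$ lies in the $(d-2)$-skeleton --- is the right one and yields the strict inequality.

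The one step you assert rather than prove, namely that at an interior point $v$ of a $(d-1)$-face only one $d$-dimensional sheet of $\tilde{X}$ can abut, is true and is exactly where normal embeddedness enters; since the paper never writes it down either, you should supply the short metric argument: if two sheets of $\tilde{X}\setminus L_0X$ abutted the same face, one could choose points $x_1,x_2\in X$ at radius $r$, one on each sheet, with $\|x_1-x_2\|=o(r)$, whereas any path in $X$ joining them must either descend to radius $\le r/2$ or change direction by a definite angle at radius comparable to $r$ (locally the two sheets meet only along $L_0X$), so the inner distance is $\ge cr$; the inner/outer ratio is then unbounded, contradicting normal embeddedness. Two minor gaps to patch as well: at least one sheet abuts every point of $L_0X$, because $L_0X\subset{\rm clos}(\tilde{X}\setminus L_0X)$ by the very construction of the spherical modification; and if $X$ is not pure-dimensional, the lower-dimensional part of $X$ contributes tangent directions only in a set of dimension $\le d-2$, so it cannot spoil genericity on the $(d-1)$-faces. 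With these details added, your proof is complete and is, as far as the text allows one to judge, the argument the authors intended.
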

 
\begin{definition} {\rm Let $c(V)\subset\R^n$ be the straight cone over $V\subset S(0,1)$ with vertex $0$. 
The $(a,\alpha)$-horn neighborhood of $c(V)$ is the following set}
$$ W_{(a,\alpha)}(c(V))=\{ x\in \R^n \ |  d(x,c(V) ) \leq a \|x\|^{\alpha}\}.
$$
\end{definition}

\begin{theorem}[Thin-Thick Decomposition]\label{thm:TtNE}
Let $X$ be a normally embedded definable set with an isolated singularity at $0\in X$. 
There exists a number $\beta >1$ such that the germ of $X$ at is decomposed in the following form
$$X=X_F\cup X_G$$ 
where $X_F=X\cap W_{(a,\beta)}(c(L_F))$ and $X_G=X\setminus X_F$. Moreover, $X_G$ is homeomorphic to a cone over $L_G(X,0)$.
\end{theorem}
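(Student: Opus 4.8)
The plan is to transport the whole problem to the spherical modification $\tilde X$ via the homeomorphism $\pi$, where the simple directions supply a manifold-with-boundary structure and Brown's Theorem \ref{thm:Brown} becomes available. The first step is to identify the image of the thin horn $X_F$ inside $\tilde X$. The inverse of $\pi$, namely $x\mapsto\frac{1+\|x\|}{\|x\|}\,x$, rescales each spherical slice by the factor $\frac{1+\|x\|}{\|x\|}$, which is comparable to $\|x\|^{-1}$ near the origin; since $c(L_F)$ is a cone, the distance to it is essentially tangential and is expanded by the same factor. Hence the defining condition $d(x,c(L_F))\le a\|x\|^{\beta}$ of $X_F$ becomes, in $\tilde X$, a neighborhood $\tilde X_F$ of $L_F$ whose width, measured at Euclidean distance $\epsilon$ from $L_0X$, is of order $a\,\epsilon^{\beta-1}$. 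Because $\beta>1$ this width tends to $0$ as $\epsilon\to 0$, so $\tilde X_F$ is a definable cusp-like neighborhood that pinches onto $L_F$ along the boundary, giving $\tilde X_F\cap L_0X=L_F$ and $\tilde X_G:=\tilde X\setminus\tilde X_F$ meeting $L_0X$ exactly along the open set $L_G$ of simple directions.

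The second step produces a collar of the thick part. Over $L_G$ every direction is simple, so near $L_G$ the set $\tilde X$ is a topological manifold with boundary along $L_0X$, and therefore $L_G$ is locally collared in $\tilde X$. The definable version of Theorem \ref{thm:Brown} then yields a definable collar $\Phi:L_G\times[0,\nu)\to\mathcal V$ onto a neighborhood $\mathcal V$ of $L_G$ in $\tilde X$ with $\Phi(u,0)=u$. The one defect of this collar is that its width may degenerate as $u$ approaches the frontier $\overline{L_G}\setminus L_G\subset L_F$.

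The crux of the argument, and the step I expect to be the main obstacle, is to choose the exponent $\beta$ (and the constant $a$) so that the thin horn $\tilde X_F$ absorbs exactly this degeneration locus, leaving a collar of $L_G$ uniform enough to be coned off. Here I would invoke the polynomially bounded o-minimal hypothesis: the two definable functions on $\tilde X_G$ measuring, respectively, the depth to which the collar structure persists and the Euclidean distance to $c(L_F)$ are tied together by a {\L}ojasiewicz inequality, and polynomial boundedness converts that inequality into a genuine power. Taking $\beta$ larger than the resulting exponent forces $\tilde X\setminus\tilde X_F$ to lie inside the region where the collar is defined and non-degenerate, which is precisely what makes the constant $\beta>1$ of the statement exist.

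Finally, on the thick part I would build the conical homeomorphism essentially as in the proof of Theorem \ref{conical-structure}, but now for the non-compact, open manifold-with-boundary piece $\tilde X_G$. Combining the (now uniform) collar direction with the radial Euclidean direction of $\tilde X$, whose radius ranges over $[1,1+\epsilon_0]$, I would push $\tilde X_G$ onto the product $L_G\times[0,1]$ by a definable homeomorphism fixing $L_G$, and then transport it back through $\pi$, collapsing the boundary copy of $L_G$ to the cone point. This identifies $X_G$ with the straight cone $c(L_G)$ over $L_G(X,0)$, as asserted; what remains is the routine verification that the pieces glue continuously across the common frontier with $X_F$ and that the resulting map is a homeomorphism.
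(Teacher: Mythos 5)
Your plan is a genuinely different route from the paper's, and most of it could be made to work, but there is a concrete error at precisely the step you flag as the crux. The choice of $\beta$ goes in the wrong direction. For $\|x\|<1$ the bound $a\|x\|^{\beta}$ \emph{decreases} as $\beta$ increases, so enlarging $\beta$ makes the horn $W_{(a,\beta)}(c(L_F))$ \emph{thinner} and the thick zone $X_G=X\setminus X_F$ \emph{bigger}, pushing it closer to $L_F$, i.e.\ deeper into the region where your collar degenerates. Quantitatively: a point of $X_G$ at radius $\epsilon$ lies over a direction $u$ with $d(u,L_F)\gtrsim a\,\epsilon^{\beta-1}$, and if the {\L}ojasiewicz inequality says the collar over $u$ persists to depth at least $C\,d(u,L_F)^{N}$, then the point is caught by the collar only when $\epsilon\le C a^{N}\epsilon^{N(\beta-1)}$, which holds for all small $\epsilon$ if and only if $N(\beta-1)<1$. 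So you must take $\beta$ \emph{close to} $1$, namely $1<\beta<1+1/N$; ``taking $\beta$ larger than the resulting exponent,'' as you wrote, makes the desired inclusion fail. This is consistent with the paper's own mechanism: its admissible exponents are those for which the arc $(R,bR^{\beta})$ lies in a region $\Omega$ bounded \emph{below} by an arc tangent to the $R$-axis, an interval of exponents just above $1$, bounded above.

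For comparison, the paper does not use Brown's Theorem \ref{thm:Brown} here at all (that is the tool for Theorem \ref{conical-structure}); instead it applies Hardt's theorem to the two-parameter family of pairs $(M^F_{R,r},M^G_{R,r})$, reads off $\beta$ from the resulting definable partition of the $(R,r)$-plane, gets constancy of the topological type of $(X^F_{\sigma},X^G_{\sigma})$ along arcs in $\Omega$ from local conical structure, and identifies the link of $X_G$ with $L_G$ by comparing with the straight-cone complement $X\setminus W_{(a,1)}(c(L_F))$ via the lemma that $X\setminus Y$ is homeomorphic to $\{f>\epsilon\}$. Note also that your final ``routine verification'' is where this machinery is actually doing work: $\tilde X_G$ is not the collar region, since its radial fiber over $u\in L_G$ is truncated by the horn boundary at height roughly $(d(u,L_F)/a)^{1/(\beta-1)}$, which tends to $0$ as $u$ approaches $L_F$; so the identification with a cone over the non-compact set $L_G$ requires a fiberwise rescaling that degenerates along the frontier, not a direct product identification. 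With the inequality on $\beta$ reversed and this rescaling carried out, your argument closes, at the cost of redoing by hand what Hardt triviality gives the paper for free; what it buys is an explicit collar picture of the thick zone in the spirit of Theorem \ref{conical-structure}.
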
 

\begin{proof} Consider a family of pairs $(M_{R,r}^F,M_{R,r}^G)$ defined by:
$$M_{R,r}^F=\{ x\in X\cap S(0,R) \ | \ d(x,L_F)\leq r \} \quad \mbox{and} \quad M_{R,r}^G= X\cap S(0,R)\setminus M_{R,r}^F.$$ 
By the Theorem of Hardt \cite{Har,Co}, there exists a definable partition of $\R^2_+$ (parameter space) such that for each element of the 
partition, the family is topologically trivial along this element. In our 2-dimensional situation, the elements of this 
partition are 1-dimensional definable curves or 2-dimensional subsets, bounded by these curves. 
Let $\Omega$ be an element of this partition, bounded by an arc, tangent to $R$-axes and by a curve, who is not tangent 
to $R$-axes. Any arc $\sigma=(R,\sigma(R))$ belonging to $\Omega$ with the endpoint $0$ defines a pair of subsets of $X$:
$$X_{\sigma}^F=\{x\in X\cap S(0,R) \ | \ d(x,c(L_F))\leq \sigma(R)\} \quad \mbox{and} \quad X_{\sigma}^G=X\setminus X_{\sigma}^F.$$

\noindent{\bf Claim 1.} {\it For all $\sigma\in\Omega$, the sets $(X_{\sigma}^F,X_{\sigma}^G)$ are homeomorphic.}

\begin{proof}[Proof of the Claim 1] The sets are definable sets and their links are $(M_{R,\sigma(R)}^F,M_{R,\sigma(R)}^G)$. 
By the Local Topological Conical Structure of definable sets, we conclude the proof.
\end{proof} 

Let $\beta >1$ be a rational number such that the arc $(R,bR^{\beta})$ belongs to $\Omega$ for some $b>0$. 
Let $X_F$ and $X_G$ be defined by the arc $(R,bR^{\beta})$.

\medskip

\begin{proposition} $X_G$ is homeomorphic to the cone over $L_G$.
\end{proposition}
We will need the following 
\begin{lemma} Let $X$ be a definable set. Let $Y\subset X$ be a compact definable subset defined as the zero set of a definable function $f:X\rightarrow\R$. Then, for sufficiently small $\epsilon>0$, the sets $X\setminus Y$ and $\{x\in X \ | \ f(x)>\epsilon \}$ are homeomorphic.
\end{lemma}

\begin{proof}[Proof of the proposition]  Consider the set $X\setminus W(a,1)(c(L_F))$, for sufficiently small $a>0$. 
The link of this set at origin is $M_{R,aR}^G$. We showed that this set is homeomorphic to the link of $X_G$. By the 
lemma this set has the link homeomorphic to $L(X,0)\setminus L_F$. That is why $X_G$ is homeomorphic to the cone over $L_G$.
\end{proof}
The proof of Theorem \ref{thm:TtNE} is now complete.
\end{proof}

\begin{theorem}\label{thm:main-NE-XY}
Let $(X,0)$ and $(Y,0)$ be germs of normally embedded definable sets.

\smallskip
1)  If $(X,0)$ and $(Y,0)$ 
are blow-spherical equivalent, then $X_G$ and $Y_G$ are homeomorphic and $X_F\setminus 0$ has 
the same homotopy type of $Y_F\setminus 0$.

\smallskip
2) If the dimension of $X$ is lower than or equal to $5$, then the thin zones $X_F\setminus 0$ and 
$Y_F \setminus 0$ are also homeomorphic.
\end{theorem}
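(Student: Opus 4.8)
The plan is to show first that any blow-spherical homeomorphism automatically respects the thin--thick splitting, and then to read off both assertions from the conical description of $X_F$ and $X_G$ furnished by Theorem~\ref{thm:TtNE}. The key preliminary observation is that the notion of simple direction is purely topological, hence transported by $h$. Indeed, being a topological manifold-with-boundary near a point is preserved by homeomorphisms, and $h\colon\tilde X\to\tilde Y$ carries $L_0X$ onto $L_0Y$; thus $v\in L_0X$ is simple if and only if $h(v)\in L_0Y$ is simple, boundary points being sent to boundary points. Consequently $h(L_G(X,0))=L_G(Y,0)$ and $h(L_F(X,0))=L_F(Y,0)$, so $h$ restricts to a homeomorphism of pairs $(L_0X,L_F(X,0))\cong(L_0Y,L_F(Y,0))$.

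For part~1, Theorem~\ref{thm:TtNE} gives $X_G\cong c(L_G(X,0))$ and $Y_G\cong c(L_G(Y,0))$; since the cone construction is functorial for homeomorphisms and $h$ restricts to a homeomorphism $L_G(X,0)\cong L_G(Y,0)$, we obtain $X_G\cong Y_G$. For the thin zone, the local conical structure used in Theorem~\ref{thm:TtNE} exhibits $X_F$ as the cone over its link $N_X:=X_F\cap S(0,R)$, so $X_F\setminus 0\cong N_X\times (0,1]\simeq N_X$. Regarded inside $L(X,0)\cong L_0X$, the link $N_X$ is a definable closed neighborhood of $L_F(X,0)$ and deformation retracts onto it (definable triviality of the distance function to $L_F(X,0)$ for small positive values), whence $X_F\setminus 0\simeq L_F(X,0)$. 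The same holds for $Y$, and the pair-homeomorphism above yields $L_F(X,0)\cong L_F(Y,0)$; therefore $X_F\setminus 0$ and $Y_F\setminus 0$ have the same homotopy type.

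For part~2, I would reduce the homeomorphism of thin zones to that of their links: since $X_F\setminus 0\cong N_X\times(0,1]$ and $Y_F\setminus 0\cong N_Y\times(0,1]$, it suffices to prove $N_X\cong N_Y$. Now $N_X$ and $N_Y$ are compact definable (hence canonically $\mathrm{PL}$) manifolds-with-boundary of dimension $\dim_0 X-1\le 4$, and each is a regular neighborhood of $L_F(X,0)$, resp.\ $L_F(Y,0)$, of dimension $\le 3$, inside the $\mathrm{PL}$ link $L_0X$, resp.\ $L_0Y$. The step I expect to be the main obstacle is to upgrade the merely topological pair-homeomorphism $(L_0X,L_F(X,0))\cong(L_0Y,L_F(Y,0))$ to an identification of regular neighborhoods: one must invoke topological invariance and uniqueness of $\mathrm{PL}$ regular neighborhoods, so that $h$ carries $N_X$ to a set homeomorphic to a regular neighborhood of $L_F(Y,0)$ and uniqueness then gives $N_X\cong N_Y$. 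It is exactly this passage from the topological $h$ back into the $\mathrm{PL}$ category that forces the dimension restriction: with $\dim L_F\le 3$ sitting in $L_0X$ of dimension $\le 4$ one can straighten or realize the matching $\mathrm{PL}$-ly (via $\mathrm{TOP}=\mathrm{PL}$ in dimension $\le 3$ together with the low-dimensional Hauptvermutung), whereas in higher dimensions the failure of topological invariance of regular neighborhoods blocks the argument. Once $N_X\cong N_Y$ is established, multiplying by $(0,1]$ yields $X_F\setminus 0\cong Y_F\setminus 0$.
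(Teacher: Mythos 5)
Your treatment of $X_G$ is fine and essentially what the paper does implicitly: simplicity of a direction is a topological property of $\tilde X$ near the point, so $h(L_G(X,0))=L_G(Y,0)$, and Theorem~\ref{thm:TtNE} plus functoriality of the cone gives $X_G\cong Y_G$. But the thin-zone argument contains a genuine error: you conflate the link $X\cap S(0,R)$ of the germ with the tangent link $L_0X$. These are different spaces in general, and precisely in the thin zone they differ drastically. Your claim that $N_X=X_F\cap S(0,R)$ is a neighborhood of $L_F(X,0)$ inside ``$L(X,0)\cong L_0X$'' deformation retracting onto it, hence $X_F\setminus 0\simeq L_F(X,0)$, is false. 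Counterexample, already implicit in the paper's discussion of fast loops: the horn $X=\{x^2+y^2=z^4,\ z\ge 0\}\subset\R^3$ is normally embedded with isolated singularity; its tangent link is a single non-simple point, so $L_F$ is a point and $X_F=X$ near $0$, yet $X_F\setminus 0$ has the homotopy type of a circle. The whole point of the thin zone is that its topology is \emph{not} captured by $L_F$; it records exactly the part of the link that collapses in the tangent cone. The same conflation invalidates part~2: $N_X$ is not a regular neighborhood of $L_F(X,0)$ in $L_0X$ (in the horn example that would be a point, while $N_X\cong S^1$), so the uniqueness-of-regular-neighborhoods strategy has nothing to act on. Structurally, your proof only uses the restriction of $h$ to the tangent links, and the pair $(L_0X,L_F(X,0))$ simply does not determine the topology of $X_F\setminus 0$.

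The paper's proof uses the data you discarded: the homeomorphism of the punctured germs induced by $h$ via $\pi$, together with tangency control. By the Curve Selection Lemma, a blow-spherical homeomorphism $\psi$ must carry each small conical neighborhood $W_{(a,1)}(c(L_F(X)))\cap X$ into a prescribed conical neighborhood $W_{(b,1)}(c(L_F(Y)))\cap Y$ (an arc tangent to $L_F(X)$ cannot map to an arc with simple tangent direction). This sandwiches the thin zones between nested conical neighborhoods; since the region between two such cones is, by the Hardt-triviality argument in the proof of Theorem~\ref{thm:TtNE}, a cylinder over the boundary $M_F(X)$ of the link of $X_F$, one gets that $M_F(X)$ and $M_F(Y)$ cobound an h-cobordism inside this cylinder, with each separating the two ends. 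This yields the homotopy equivalence of the thin links, hence of $X_F\setminus 0$ and $Y_F\setminus 0$. For part~2, when $\dim_0X\le 5$ the manifolds $M_F(X)$ and $M_F(Y)$ have dimension $\le 3$ and the h-cobordism between them has dimension $\le 4$, where h-cobordant manifolds are homeomorphic; this, not a regular-neighborhood argument in the tangent link, is the source of the dimension restriction. To repair your proposal you would have to replace the intrinsic identification $N_X\simeq L_F(X,0)$ by this sandwich/h-cobordism comparison of the links of $X_F$ and $Y_F$.
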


\begin{proof}First, if a mapping $\psi: (X,0)\rightarrow (Y,0)$ is a blow-spherical homeomorphism, 
then for any sufficiently small conical neighborhood $W_{(b,1)}(L_F(Y))\cap Y$ there exists a small conical neighborhood 
$W_{(a,1)}(L_F(X))\cap X$ such that $\psi(W_{(a,1)}(L_F(X))\cap X)\subset W_{(b,1)}(L_F(Y))\cap Y$. Indeed, 
if this claim were not true there would exist a sufficiently small conical neighborhood $W_{(b,1)}(L_F(Y))\cap Y$ 
such that $\psi(W_{(a,1)}(L_F(X))\cap X)\not \subset W_{(b,1)}(L_F(Y))\cap Y$. In other words, the Curve Selection Lemma
would guarantee the existence of a continuous definable arc $\gamma$ on $(X,0)$, tangent to $L_F(X)$, such that its image 
by $\psi$ is outside the  conical neighborhood $W_{(b,1)}(L_F(Y))\cap Y$. 
We would get a contradiction since the tangent direction of $\psi(\gamma)$ is simple 
while the tangent direction of $\gamma$ is not.

Let $M_F(X)$ be the boundary of the link of $X_F$ and $M_F(Y)$ be the boundary of the link of $Y_F$. The set of 
points of $X\cap S(0,\epsilon)$ situated between two small cones $W_{(a,1)}(L_F(X))\cap X$ and $W_{(\tilde{a},1)}(L_F(X))\cap X$ 
is homeomorphic to a cylinder over $M_F(X)$. By the lemma, $M_F(Y)$ divides the cylinder into two connected components 
such that  the different boundaries belong to different connected components. By the remark above, the same is true 
for the cylinder over $M_F(X).$ That is why the link of $X_F$ and the link of $Y_F$ have the same homotopy type. 

\smallskip
When ${\rm dim}_0X\leq 5$, the h-cobordant manifolds $M_F(X)$ and $M_F(Y)$ are homeomorphic.
\end{proof}

\begin{remark} {\rm The Thin-Thick Decomposition from \cite{BNP} is obtained from our Thin-Thick Decomposition in the following way. 
Consider a normal complex surface singularity and consider its real normal embedding. The set $L_F$ is thus obtained by 
the lifting of the essential exceptional directions, since all non-exceptional directions and non-essential exceptional 
directions correspond to simple points.}
\end{remark}

\section{General Case}

Let $J_{k,s}$ be the collection of half-planes of $\R^{k+2}$ defined as 
the Cartesian product of $\R^k$ with the following $s$ half-lines of $\R^2$ 
$$\bigcup_{\lambda=1}^s\{(x,y) \quad | \quad x\geq 0 \quad \mbox{and} \quad y=\lambda x\}$$
This object looks like a pamphlet with $s$ pages (of dimension $k$).

\medskip
Let $(X,0)$ be the germ of a definable set in $\R^n$, not necessarily normally embedded and with an isolated singularity. 
Consider the spherical modification $\tilde{X}$ of $X$. 
Let $L_0X$ be the tangent link of $(X,0)$. 
A point $v\in L_0X$ is called a \emph{simple direction} if the germ of $\tilde{X}$ at $v$ is homeomorphic to the germ $(J_{s,k},0)$ 
for some $s\in\N$ and where $k+1=\rm{dim}_0X$. The integer $s$ is called the \emph{directional multiplicity} of $(X,0)$ at $v$. 
Let $L_G(X,0)$ be the set of simple directions of $(X,0)$ and let $L_G^s(X,0)$ be the set of simple directions of multiplicity 
$s$. The family  $\{ L_G^s(X,0) \}_s$ defines clearly a partition of  $L_G(X,0)$.

\begin{remark}
{\rm The set of simple directions $L_G(X,0)$ is a definable open subset of $L_0X$.}
\end{remark}
The next result although obvious from the definitions is worth mentioning.
\begin{proposition} Let $(X,0)$ and $(Y,0)$ be germs of definable sets. Let $\Psi : \tilde{X}\rightarrow\tilde{Y}$ be 
a blow-spherical homeomorphism. For any interger $s$ we find $\Psi(L_G^s(X,0))=L_G^s(Y,0)$ for any integer $s$. 
In particular, $\Psi(L_G(X,0))=L_G(Y,0)$.
\end{proposition}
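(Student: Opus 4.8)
The plan is to unwind the definitions and show that a blow-spherical homeomorphism preserves the local topological type of the spherical modification at every point, so that it carries simple directions of a given multiplicity to simple directions of the same multiplicity. Recall that $\Psi:\tilde X\to\tilde Y$ is by hypothesis a homeomorphism sending $L_0X$ onto $L_0Y$. First I would fix a point $v\in L_G^s(X,0)$ and set $w=\Psi(v)$. Since $v$ is a simple direction of multiplicity $s$, the germ $(\tilde X,v)$ is homeomorphic to $(J_{s,k},0)$, where $k+1=\dim_0 X$. The whole argument rests on the fact that $\Psi$ restricts to a homeomorphism of germs $(\tilde X,v)\to(\tilde Y,w)$, hence $(\tilde Y,w)$ is homeomorphic to $(J_{s,k},0)$ as well.

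Second, I would verify that $w$ lies in $L_0Y$: this is immediate since $\Psi(L_0X)=L_0Y$ and $v\in L_0X$ (every simple direction is by definition a point of the tangent link). Thus $w$ is a point of $L_0Y$ whose germ in $\tilde Y$ is homeomorphic to $(J_{s,k},0)$, which is precisely the definition of $w\in L_G^s(Y,0)$, provided one also checks that the ambient dimensions agree. The equality $\dim_0 X=\dim_0 Y$ follows because $\tilde X$ and $\tilde Y$ are homeomorphic, so their dimensions coincide, and $\dim\tilde X=\dim_0 X$ (the spherical modification has the same dimension as the germ). Hence $k+1=\dim_0 X=\dim_0 Y$, and the model $J_{s,k}$ used to test simple directions of $Y$ is the same as that for $X$. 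This gives $\Psi(L_G^s(X,0))\subset L_G^s(Y,0)$.

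Third, to obtain equality I would apply the identical argument to the inverse homeomorphism $\Psi^{-1}:\tilde Y\to\tilde X$, which is again a blow-spherical homeomorphism sending $L_0Y$ onto $L_0X$. This yields $\Psi^{-1}(L_G^s(Y,0))\subset L_G^s(X,0)$, i.e.\ $L_G^s(Y,0)\subset\Psi(L_G^s(X,0))$, so the two inclusions combine to the asserted equality $\Psi(L_G^s(X,0))=L_G^s(Y,0)$ for every $s$. Taking the union over all $s$ and using that $\{L_G^s(X,0)\}_s$ and $\{L_G^s(Y,0)\}_s$ partition $L_G(X,0)$ and $L_G(Y,0)$ respectively gives $\Psi(L_G(X,0))=L_G(Y,0)$.

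There is no serious obstacle here, which is why the authors call the statement ``obvious from the definitions''; the only point requiring a word of care is the topological invariance of the directional multiplicity, namely that two distinct models $J_{s,k}$ and $J_{s',k}$ with $s\neq s'$ are never homeomorphic as germs. This is what guarantees that $s$ is genuinely determined by the local homeomorphism type of $\tilde X$ at $v$ and hence is preserved by $\Psi$. One can see this, for instance, by noting that the number of pages $s$ is recovered from the local homology or from counting the connected components of a small punctured neighborhood of $v$ inside $\tilde X$ after removing the common boundary stratum $\R^k\times\{0\}$; since $\Psi$ is a homeomorphism of germs it preserves this count, and therefore the multiplicity.
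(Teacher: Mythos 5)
Your proof is correct and takes essentially the same approach as the paper: in fact the paper offers no proof at all, introducing the proposition with ``the next result although obvious from the definitions is worth mentioning,'' and your argument is exactly that definition-unwinding (transport the local model $(J_{s,k},0)$ by the germ homeomorphism at each $v\in L_0X$, match dimensions, and apply the same reasoning to $\Psi^{-1}$ for the reverse inclusion). Your attention to the one genuinely non-trivial point --- that $(J_{s,k},0)$ and $(J_{s',k},0)$ are non-homeomorphic germs for $s\neq s'$, which is safest via local homology since the spine $\R^k\times\{0\}$ is not topologically distinguished when $s=2$ --- is a sound filling-in of what the paper leaves implicit.
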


Let $X$ be a definable subset of $\R^n$ (not necessarily normaly embedded). 
According to \cite{BM}, there exist a subset $X'\subset\R^m$ and a map $\Phi :X'\rightarrow X$ such that

\begin{enumerate}
\item $X'$ is normally embeddd in $\R^m$; The subset $X'$ is called a \emph{normal embedding} 
of $X$.
\item The map $\Phi$ is definable and bi-Lipschitz with respect to the inner metric.
\end{enumerate}
By the results of the previous section, $X'$ has a thin-thick decomposition. The normal embedding mapping 
$\Phi$ induces a mapping $\tilde{\Phi}: \tilde{X'}\rightarrow \tilde{X}$ as follows. 
Let $v\in L_0X'$ and let $\gamma$ be a definable arc on $X'$ such that $v$ is the unit tangent vector of $\gamma$ at $0$. We define 
$\tilde{\Phi}(v)$ as the unit tangent vector of $\Phi(\gamma)$ at $0$. This construction is analogous to the construction 
presented by Bernig and Lytchak \cite{BL}. It is also clear that $\tilde{\Phi}$ is a definable map. 

Picking up a pancake decomposition of $X$, one can show that $\tilde{\Phi}$ is a Lipschitz finite map. Indeed, this
map is bi-Lipschitz on each pancake. The results of \cite{BL} and \cite{BFN2} imply that $\tilde{\Phi}$ is bi-Lipschitz 
on each pancake. Since, a pancake decomposition is finite, the map $\tilde{\Phi}$ is also finite.

Given $v\in L_0X$, the number $mult_v(X,0)$ is the \emph{multiplicity of $X$ at $0$ along $v$}, 
defined as the number of pre-images of $v$ by $\tilde{\Phi}$. 

Consider a direction $v\in L_0X$ as a point of the spherical modification of $(X,0)$. 
Let $N_v(X,0)=(\tilde{X}\cap B(v,\epsilon)\setminus L_0 X)$ the intersection of $\tilde{X}$ with 
the Euclidean ball $B(v,\epsilon)$ of radius $\epsilon$ and centered at $v$ .

\begin{proposition} 
The multiplicity  $mult_v(X,0)$ is the number of connected components of $N_v(X,0)$.
\end{proposition}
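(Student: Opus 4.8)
The plan is to establish a bijective correspondence between the connected components of $N_v(X,0)$ and the preimages $\tilde{\Phi}^{-1}(v)$, using the fact established above that $\tilde{\Phi}$ is a finite map that is bi-Lipschitz on each pancake of a pancake decomposition of $X$. First I would set up the local picture: fix a simple direction $v \in L_0 X$ with directional multiplicity $s$, so that the germ of $\tilde{X}$ at $v$ is homeomorphic to the pamphlet germ $(J_{s,k},0)$ with $k+1 = \dim_0 X$. The punctured neighborhood $N_v(X,0) = \tilde{X} \cap B(v,\epsilon) \setminus L_0 X$ corresponds, under this homeomorphism, to the $s$ pages of the pamphlet with the common boundary $\R^k \times \{0\}$ removed. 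Since removing the common boundary separates the $s$ half-planes into $s$ disjoint open pieces, $N_v(X,0)$ has exactly $s$ connected components whenever $v$ is a simple direction of multiplicity $s$. So the content of the proposition is really the identity $mult_v(X,0) = s$ at simple directions, together with the matching count in the non-simple case.

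Next I would analyze $\tilde{\Phi}^{-1}(v)$. Each preimage $w \in L_0 X'$ comes from a direction on the normally embedded model $X'$, and near each such $w$ the map $\tilde{\Phi}$ is the induced tangent map of a bi-Lipschitz pancake homeomorphism; by the results of \cite{BL} and \cite{BFN2} invoked above, $\tilde{\Phi}$ restricted to a neighborhood of $w$ in $\tilde{X'}$ is a homeomorphism onto one of the local sheets of $\tilde{X}$ at $v$. The key step is to show that distinct preimages $w_1, \dots, w_s$ map to distinct sheets and that together they account for all the local sheets at $v$. Surjectivity onto sheets follows because $\tilde{\Phi}$ is finite and the normalization $\Phi$ is a bijection away from the singular point, so every point of $N_v(X,0)$ lifts via a pancake; injectivity on sheets follows from the fact that $\tilde{\Phi}$ is bi-Lipschitz (hence locally injective) on each pancake. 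Counting preimages of $v$ and counting sheets at $v$ therefore yield the same integer.

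I would then assemble these two counts. On one side, the number of connected components of $N_v(X,0)$ equals the number of local sheets of $\tilde{X}$ at $v$ (at a simple direction this is the pamphlet multiplicity $s$, and in general it is the number of locally separated components of $\tilde{X} \setminus L_0 X$ near $v$). On the other side, $mult_v(X,0)$ is by definition $\#\tilde{\Phi}^{-1}(v)$, and the pancake argument shows each preimage corresponds bijectively to one such sheet. Because both quantities count the same local sheets, they are equal. The definability of all the data and the Curve Selection Lemma guarantee that the arcs used to define $\tilde{\Phi}$ land in well-defined sheets, so the correspondence is genuinely bijective rather than merely surjective or injective.

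The main obstacle I anticipate is the precise control of how the pancakes glue together near $v$: a priori two different pancakes of $X$ could contain the same direction $v$ in their tangent link while lifting to the same component of $N_v(X,0)$, which would undercount, or a single pancake could wrap around and contribute to two components, which would overcount. Ruling these out requires the normal embedding to genuinely separate the sheets, i.e. that the inner-metric bi-Lipschitz structure of each pancake forces distinct preimages $w_i$ to sit in distinct connected components of $N_v(X,0)$. This is exactly where one must use that $\tilde{\Phi}$ is bi-Lipschitz (not merely continuous) on each pancake together with the finiteness of the pancake decomposition; making the local separation argument rigorous, rather than merely plausible from the pamphlet picture, is the technical heart of the proof.
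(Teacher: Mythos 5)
Your proposal takes essentially the same route as the paper: the paper realizes the preimages $\tilde{\Phi}^{-1}(v)$ as a maximal family of pairwise non-tangent arcs $\gamma_1,\dots,\gamma_s$ on $(X',0)$ whose $\Phi$-images are tangent to $v$, and then matches each connected component of $N_v(X,0)$ with exactly one of the lifted arcs $\tilde{\Phi}(\gamma_i)$, invoking the finiteness of $\tilde{\Phi}$ --- which is precisely your bijection between preimages and local sheets. If anything your writeup is more explicit than the paper's two-line argument, since you correctly isolate as the technical heart the separation step (distinct preimages must land in distinct components, and one preimage cannot feed two), which the paper dispatches with the terse ``since $\tilde{\Phi}$ is finite.''
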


\begin{proof} Take $v\in L_0X$. The pre-images of $v$ are obtained in the following way: consider a maximal 
collection of arcs $\gamma_1,\dots,\gamma_s$ on $(X',0)$ such that, (for $i\neq j$), $\gamma_i$ is not tangent 
to $\gamma_j$, but their images by $\Phi$ are tangent arcs in $(X,0)$ with the unit tangent vector $v$.
Each connected component of the set $(\tilde{X}\cap B(v,\epsilon)\setminus L_0 X)$ contains an arc of 
the family $\{ \tilde{\Phi}(\gamma_i) \}$ and only one since $\tilde{\Phi}$ is finite.
\end{proof}

In particular, we proved that the inverse image of a simple point contains only simple points. On the other 
hand, the image of a simple direction $v'\in L_0X'$ by a normal embedding map may be not a simple point.
 (see example).


Note that $L_G(X,0)=\bigcup_{s=0}^S L_G^s(X,0)$ where $S$ is the maximal value of the multiplicities of simple points. 

\begin{proposition} For each $s$, ${\rm dim} L_G^s(X,0)={\rm dim}X-1$ if $L_G^s$ is not empty. 

Let $L_F(X,0)=L_0X\setminus L_G(X,0)$. 
Then, ${\rm dim}L_F(X,0)<{\dim}X-1$.
\end{proposition}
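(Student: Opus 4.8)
The plan is to prove the two assertions by different means: the equality $\dim L_G^s(X,0)=\dim X-1$ comes from the local product structure of the pamphlet model, while the strict inequality for $L_F(X,0)$ is transported from the normal embedding $X'$ through the finite definable map $\tilde\Phi$, using the bound $\dim L_F(X',0)<\dim_0X'-1$ already available for $X'$.

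For the first assertion, set $k=\dim_0X-1$. Since $L_G^s(X,0)\subseteq L_0X$ and $\dim L_0X\le k$, only the lower bound needs an argument, and it suffices to produce one $k$-dimensional germ of $L_G^s(X,0)$. Fix $v\in L_G^s(X,0)$. The germ of $\tilde X$ at $v$ is homeomorphic to the pamphlet with $s$ pages $(J_{k,s},0)$, which is a product along an $\R^k$-factor; hence $L_0X$ is $k$-dimensional at $v$ and, translating along that $\R^k$-factor, every direction of $L_0X$ near $v$ again has germ homeomorphic to $(J_{k,s},0)$. By the Remark, $L_G(X,0)$ is open in $L_0X$; the number of pages, which coincides with the number of connected components of $N_v(X,0)$, i.e.\ with $mult_v(X,0)$, is therefore locally constant on $L_G(X,0)$, so $L_G^s(X,0)$ is open in $L_0X$. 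A nonempty open subset of $L_0X$ containing the $k$-dimensional germ at $v$ has dimension $k$, whence $\dim L_G^s(X,0)=k=\dim X-1$.

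For the second assertion, write again $k=\dim_0X-1=\dim_0X'-1$ (recall $\Phi$ is a homeomorphism, so $\dim X'=\dim X$); if $\dim L_0X<k$ there is nothing to prove, so assume $\dim L_0X=k$ and, arguing by contradiction, suppose $\dim L_F(X,0)=k$. By o-minimal cell decomposition $L_F(X,0)$ then contains a $k$-dimensional definable submanifold $M$ lying in the smooth $k$-locus of $L_0X$. Because $\tilde\Phi$ is finite and definable, $\tilde\Phi^{-1}(M)$ is again $k$-dimensional; since $\dim L_F(X',0)<k$, the simple part $M'=\tilde\Phi^{-1}(M)\cap L_G(X',0)$ still has dimension $k$ and $\tilde\Phi(M')\subseteq M\subseteq L_F(X,0)$. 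Removing the images of $L_F(X',0)$ and of the pancake boundaries, both of dimension $<k$, I obtain a point $v\in\tilde\Phi(M')$ whose $\tilde\Phi$-fiber $\{v_1',\dots,v_p'\}$ consists only of simple directions of $X'$, each interior to a pancake, with $v$ a smooth point of $L_0X$.

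Now I analyse $\tilde X$ near such a $v$, and this is where the main obstacle lies: one must turn the hypothesis that every preimage is simple into the global pamphlet model. On each pancake $\tilde\Phi$ is bi-Lipschitz, so it restricts to a homeomorphism of a neighbourhood $U_j$ of $v_j'$ onto a sheet $V_j\subseteq\tilde X$; as $v_j'$ is simple and $X'$ normally embedded, $U_j\cong\R^k\times[0,\infty)$ with boundary $U_j\cap L_0X'\cong\R^k\times\{0\}$. Using $\tilde\Phi^{-1}(L_0X)=L_0X'$ and that $L_0X$ is a single $k$-manifold germ at the smooth point $v$, each boundary $\tilde\Phi(U_j\cap L_0X')$ maps homeomorphically onto this common germ, while by the identification $mult_v(X,0)=\#\{\text{components of }N_v(X,0)\}=p$ the interiors $V_j\setminus\partial V_j$ are exactly the $p$ distinct components of $N_v(X,0)$. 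Hence near $v$ the set $\tilde X$ is $p$ half-spaces glued along one and the same $\R^k$ with disjoint interiors, i.e.\ homeomorphic to $(J_{k,p},0)$, so $v$ is simple, contradicting $v\in L_F(X,0)$ and yielding $\dim L_F(X,0)<\dim X-1$. The delicate steps are precisely that $\tilde\Phi$ is a genuine local homeomorphism near each preimage (whence the passage to points interior to the pancakes, using finiteness together with the pancakewise bi-Lipschitz property) and that these sheets meet only along the common boundary $L_0X$, and not along extra lower-dimensional loci; the identification of $mult_v(X,0)$ with the number of components of $N_v(X,0)$ is what forces the page count to be exactly $p$, and I expect this bookkeeping to be the hardest part.
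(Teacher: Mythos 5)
The paper states this proposition with no proof at all, so there is nothing of the authors' to compare your argument against: what you have written fills a genuine gap in the text, and it does so using exactly the paper's own toolkit (finiteness and definability of $\tilde{\Phi}$, the pancake-wise bi-Lipschitz assertion, the identification of $mult_v(X,0)$ with the number of components of $N_v(X,0)$, and the normally embedded bound ${\rm dim}\, L_F(X',0)<{\rm dim}_0X'-1$ --- note that this last input is itself stated without proof in Section 2, so your second assertion is only as solid as that earlier proposition; this reliance is consistent with the paper but worth flagging). Your contradiction scheme for the second part is sound: pulling a $k$-dimensional piece of $L_F(X,0)$ back through the finite definable map $\tilde{\Phi}$, discarding the images of $L_F(X',0)$, of the directions tangent to pancake intersections, and of the non-manifold locus of $L_0X$ (all of dimension $<k$), and then reassembling the germ of $\tilde{X}$ at $v$ from the images of half-space neighborhoods of the fiber does produce a pamphlet. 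The delicate steps you identified go through: injectivity of $\tilde{\Phi}$ off the link holds because there it is conjugate, via the maps $\pi$, to the homeomorphism $\Phi$; the properness/finiteness bookkeeping (the germ of $\tilde{X}$ at $v$ is the union of the images of small neighborhoods of the $v_j'$) follows from compactness of $\tilde{X}'$ and the fiber being exactly $\{v_1',\dots,v_p'\}$; and invariance of domain makes each sheet boundary open in the $k$-manifold germ $(L_0X,v)$, so the $p$ sheets share the common binding.

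One point deserves an explicit warning, because it affects the first assertion. You write that the pamphlet model at $v$ forces $L_0X$ to be $k$-dimensional at $v$; this tacitly assumes that the germ homeomorphism $(\tilde{X},v)\cong (J_{k,s},0)$ carries $L_0X$ onto the binding $\R^k\times\{0\}$. Under the paper's literal definition, which only requires a homeomorphism of germs of sets, this fails for $s=2$: the germ $(J_{k,2},0)$ is homeomorphic to $(\R^{k+1},0)$, so for the horn $\{x^2+y^2=z^3,\ z\geq 0\}\subset\R^3$ the unique tangent direction $v$ satisfies $(\tilde{X},v)\cong(\R^2,0)\cong (J_{1,2},0)$ even though $L_0X=\{v\}$ is a single point --- making $v$ ``simple of multiplicity $2$'' and falsifying ${\rm dim}\, L_G^2={\rm dim}\, X-1$ as literally stated. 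The proposition, and your proof of both halves (which uses the binding identification again when gluing the sheets along $L_0X$), are correct once ``simple'' is read as a homeomorphism of pairs sending $L_0X$ to $\R^k\times\{0\}$; this is clearly the intended definition, since the paper's identification of pages with components of $N_v(X,0)$ presupposes it, but you should state the convention rather than use it silently.
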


Let $L_G^s(X')=\tilde{\Phi}^{-1}(L_G^s(X,0))$. Remember that $\Phi$ is a normal embedding map and $\tilde{\Phi}$ is 
the extension of $\Phi$ to $\tilde{X'}$.

\begin{theorem}[Thin-Thick Decomposition] There exists $\beta>1$ such that the decomposition $X_F=X\cap W_{(a,\beta)}(c(L_F(X,0)))$ and $X_G=X-X_F$ satisfies the following:
$$X_G=\bigcup_{s=0}^S X_G^s$$
Each $X_G^s$ has tangent cone of dimension ${\rm dim}X-1$ and each $X_G^s$ is homeomorphic to a cone over $L_G^s(X')$.
\end{theorem}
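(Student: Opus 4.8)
The plan is to reduce the general case to the normally embedded case already handled in Theorem \ref{thm:TtNE}, using the normal embedding map $\Phi : X' \to X$ and its spherical extension $\tilde{\Phi}$. Since $X'$ is normally embedded, the previous section gives a thin-thick decomposition $X' = X'_F \cup X'_G$ together with the partition $X'_G = \bigcup_s X'^s_G$ induced by the directional multiplicities, and each piece $X'^s_G$ is a cone over the corresponding part of $L_G^s(X')$. My strategy is to transport this decomposition downstairs through $\Phi$ and verify that the images assemble into the claimed decomposition of $X$.

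First I would fix the exponent $\beta > 1$. Applying the Hardt triviality theorem to the two-parameter family $M^F_{R,r} = \{x \in X \cap S(0,R) \mid d(x, c(L_F(X,0))) \le r\}$ exactly as in the proof of Theorem \ref{thm:TtNE}, I obtain a definable cell decomposition of the parameter space $\R^2_+$ along which the family is topologically trivial. I then choose $\beta$ rational so that the arc $(R, bR^\beta)$ lies in the distinguished element $\Omega$ (bounded by an arc tangent to the $R$-axis and a non-tangent curve) for some $b > 0$; this $\beta$ defines $X_F = X \cap W_{(a,\beta)}(c(L_F(X,0)))$ and $X_G = X \setminus X_F$. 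The same Hardt-triviality argument, phrased through Claim 1 in the proof of Theorem \ref{thm:TtNE}, shows that the homeomorphism type of the pair $(X_F, X_G)$ does not depend on the particular arc chosen in $\Omega$, so I am free to replace $\beta$ by the linear exponent $1$ when computing links.

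Next I would set up the identification of pieces. Define $X^s_G$ to be the union over $v \in L^s_G(X,0)$ of the local sheets of $X_G$ tangent to $v$; by the earlier Proposition, $mult_v(X,0)$ equals the number of connected components of $N_v(X,0)$, and $L^s_G(X') = \tilde{\Phi}^{-1}(L^s_G(X,0))$. Because $\tilde{\Phi}$ is a definable finite map, bi-Lipschitz on each pancake, its restriction over the open simple locus $L_G(X,0)$ is an $s$-to-$1$ covering onto $L^s_G(X,0)$, so $\Phi$ carries the normally embedded thick piece $X'^s_G$ over $L^s_G(X')$ to $X^s_G$. I would then invoke Theorem \ref{thm:TtNE} applied to $X'$, which already identifies $X'^s_G$ with a cone over $L^s_G(X')$, and push this cone structure forward by the homeomorphism $\Phi$ (restricted to the relevant thick piece, where it is a homeomorphism onto its image since the sheets tangent to distinct preimages of $v$ are separated in $\tilde{X'}$). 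The statement $\dim T_0 X^s_G = \dim X - 1$ follows from the corresponding dimension statement for $L^s_G$ in the preceding Proposition, together with the fact that the tangent cone of a thick piece is the straight cone over its link.

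The main obstacle I anticipate is the \emph{gluing of sheets}: while upstairs the $s$ sheets over a simple direction $v$ are disjoint (as $X'$ is normally embedded and the arcs $\gamma_1,\dots,\gamma_s$ are pairwise non-tangent), downstairs their images under $\Phi$ become tangent at $v$ and share the common boundary direction, forming the open-pamphlet structure $(J_{s,k},0)$. I must check that $\Phi$ nevertheless remains injective on the thick piece away from the tangent link, so that $X^s_G$ is genuinely homeomorphic to the disjoint-sheets cone $X'^s_G$ rather than a quotient of it. This is where finiteness and the pancake-wise bi-Lipschitz property of $\tilde{\Phi}$ are essential: each connected component of $N_v(X,0)$ contains exactly one arc $\tilde{\Phi}(\gamma_i)$, so no identification of distinct sheets occurs off $L_0X$, and the cone-over-$L^s_G(X')$ description survives intact.
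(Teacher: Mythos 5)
Your proposal follows essentially the same route as the paper's proof: fix $\beta$ by Hardt triviality exactly as in Theorem \ref{thm:TtNE}, pull the thick zone back through the normal embedding $\Phi$ to $X'$, split it there by directional multiplicity into the pieces over $L_G^s(X')$, apply the normally embedded Thin-Thick theorem to each piece, and push the cone structure forward. Your anticipated obstacle about sheet-gluing is a non-issue that the paper dispatches in one line (``since the map $\Phi$ is a homeomorphism''): by definition a normal embedding map is bi-Lipschitz for the inner metric, hence a homeomorphism $X'\to X$, so no identification of points ever occurs -- the $s$ pages of the pamphlet are glued only in the spherical modification $\tilde{X}$ along $L_0X$, not in $X\setminus 0$ itself.
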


\begin{proof} Proceeding as we did in the proof of the Thin-Thick Decomposition in normally embedded case, 
one can find a horn like neighborhood $W_{(a,\beta)}(c(L_F(X)))$ of $L_F(X)$ such that 
$X\setminus W_{(a,\beta)(c(L_F(X)))}$ has the same topology as $X\setminus W_{(\bar{a},1)(c(L_F(X)))}$. 
Let $X''$ be the pre-image of $X\setminus W_{(\bar{a},1)(c(L_F(X)))}$ by the map $\Phi$. 
This set can be presented as a union of the sets $(X')_G^s$ defined as the 
union of the subsets of $X''$ having the tangent cone belonging to $L_G^s(X')$.

By the Thin-Thick Decomposition Theorem for normally embedded sets, all the subsets $(X')_G^s$ are homeomorphic 
to their tangent cones. Thus, $(X')_G^s$ is homeomorphic to $L_G^s(X')$. Since the map $\Phi$ is a homeomorphism, 
$X_G^S$ is also homeomorphic to $L_G^s(X')$. This proves the theorem.
\end{proof}

\begin{theorem}\label{thm:main-nNE-XY}
Let $(X,0)$ and $(Y,0)$ be two definable  blow-spherical equivalent germs. Then
\begin{enumerate}
\item $X_G$ is homeomorphic to $Y_G$;
\item Each $X_G^s$ is homeomorphic to $Y_G^s$;
\item The sets $X_F\setminus 0$ and $Y_F\setminus 0$ have the same homotopy type.
\end{enumerate}
\end{theorem}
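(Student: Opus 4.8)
The plan is to leverage the blow-spherical homeomorphism $\Psi : \tilde X \to \tilde Y$ together with the two structural results already established: the invariance of the simple-direction strata (the Proposition asserting $\Psi(L_G^s(X,0)) = L_G^s(Y,0)$ for every $s$) and the Thin-Thick Decomposition Theorem for the general case, which identifies each $X_G^s$ with a cone over $L_G^s(X')$. The overall strategy mirrors the normally embedded proof (Theorem \ref{thm:main-NE-XY}), but now one must track the directional multiplicity throughout. First I would establish, exactly as in the first paragraph of the proof of Theorem \ref{thm:main-NE-XY}, that $\Psi$ maps sufficiently thin horn neighborhoods of $c(L_F(X,0))$ into horn neighborhoods of $c(L_F(Y,0))$; the Curve Selection Lemma argument carries over verbatim, because a definable arc tangent to $L_F(X)$ cannot be sent by $\Psi$ to an arc whose tangent direction is simple, since $\Psi$ preserves $L_G$ and hence $L_F$ by the Proposition above.

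For claim (2), I would argue stratum by stratum. Since $\Psi(L_G^s(X,0)) = L_G^s(Y,0)$ and each $X_G^s$ is homeomorphic to a cone over $L_G^s(X')$ by the Thin-Thick Decomposition Theorem, it suffices to produce a homeomorphism $L_G^s(X') \to L_G^s(Y')$ compatible with cone structures. The restriction of $\Psi$ to $L_0 X$ already gives a homeomorphism $L_G^s(X,0) \to L_G^s(Y,0)$; the remaining point is to lift this through the finite maps $\tilde\Phi_X : \tilde{X'} \to \tilde X$ and $\tilde\Phi_Y : \tilde{Y'} \to \tilde Y$. Here the multiplicity is exactly the covering number, since by the Proposition the number of connected components of $N_v(X,0)$ equals $mult_v(X,0) = s$ on $L_G^s$. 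Because the restricted map is a covering of the same degree $s$ over each stratum and $\Psi$ carries $N_v(X,0)$-components to $N_{\Psi(v)}(Y,0)$-components (the pamphlet pages are matched by the local homeomorphism), the lift exists and is itself a homeomorphism. Claim (1) then follows by taking the union $X_G = \bigcup_{s=0}^S X_G^s$ over the matched strata.

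For claim (3), the homotopy-type statement, I would reproduce the cylinder argument of Theorem \ref{thm:main-NE-XY}: letting $M_F(X)$ and $M_F(Y)$ denote the boundaries of the links of $X_F$ and $Y_F$, the image $\Psi(M_F(X))$ separates a cylinder over $M_F(Y)$ into two components whose boundaries lie in distinct components, forcing the links of $X_F \setminus 0$ and $Y_F \setminus 0$ to share a homotopy type. The essential input is the containment of horn neighborhoods established in the first step, together with the fact that $L_F$ is preserved; the passage from inclusion to homotopy equivalence is formal once the separation property is in hand.

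The main obstacle I anticipate is the lifting step in claim (2): one must verify that $\Psi$ is genuinely compatible with the pamphlet (open-pamphlet) structure over each simple direction, i.e.\ that it matches the $s$ pages over a point $v \in L_G^s(X,0)$ with the $s$ pages over $\Psi(v) \in L_G^s(Y,0)$ in a way that respects the connected-component labelling used to define $L_G^s(X') = \tilde\Phi^{-1}(L_G^s(X,0))$. Since $\Psi$ is only a topological homeomorphism of the spherical modifications and need not interact a priori with the normal-embedding maps $\Phi_X, \Phi_Y$, one should argue that the component-counting invariant of $N_v$ is intrinsic to $\tilde X$ near $v$ and is therefore automatically preserved by any blow-spherical homeomorphism — which is precisely the content of the Proposition identifying $mult_v$ with the number of components of $N_v(X,0)$. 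Once this intrinsic characterization is invoked, the lifting is forced and the argument closes.
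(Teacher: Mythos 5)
Your proposal is correct and follows essentially the same route as the paper: the paper's entire proof of this theorem is the single sentence that it is ``the same as'' the argument for the normally embedded case (Theorem \ref{thm:main-NE-XY}), i.e.\ the horn-neighborhood containment via the Curve Selection Lemma, the identification of each thick stratum with a cone via the Thin-Thick Decomposition, and the cylinder/separation argument for the homotopy type of the thin zones --- exactly the three steps you carry out. In fact your treatment of the only genuinely new point in the general case --- that the covers $\tilde\Phi^{-1}(L_G^s(X,0)) \to L_G^s(X,0)$ must be matched by $\Psi$, which you resolve by observing that the pages are intrinsically the germs of connected components of $N_v$ and hence transported by any blow-spherical homeomorphism --- is more detailed than the paper, which passes over this lifting issue in silence.
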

\begin{proof} This proof is the same as the proof of the Thin-Thick Decomposition Theorem for normally embedding sets.
\end{proof}

\end{document}